\documentclass[10pt]{amsart}

\usepackage[english]{babel} 
\usepackage[utf8]{inputenc}

\usepackage[shortlabels]{enumitem}
\usepackage{amsfonts}
\usepackage{amsmath}
\usepackage{amssymb}
\usepackage{amsthm}
\usepackage{mathtools}
\usepackage{stmaryrd}
\usepackage{hyperref}
\usepackage{mathrsfs}
\hypersetup{
    colorlinks=false,
    linkcolor=blue
}
\usepackage[capitalize]{cleveref}
\usepackage{tikz}
\usetikzlibrary{cd}

\usepackage{graphicx}
\usepackage{algorithm}
\usepackage{algpseudocode}
\usepackage{booktabs}

\usepackage{colonequals}

\newtheorem{thm}{Theorem}[section]

\newtheorem{prop}[thm]{Proposition}

\newtheorem{quest}[thm]{Question}

\theoremstyle{definition}
\newtheorem{defi}[thm]{Definition}

\newtheorem{exam}{Example}[section]

\theoremstyle{remark}
\newtheorem{rem}[thm]{Remark}

\newcommand{\N}{\mathbb N}
\newcommand{\Z}{\mathbb Z}
\newcommand{\Q}{\mathbb Q}

\newcommand{\C}{\mathbb C}

\newcommand{\Proj}{\operatorname{Proj}}
\newcommand{\Spec}{\operatorname{Spec}}

\newcommand{\FF}{\mathbb{F}}
\newcommand{\PP}{\mathbf{P}}

\renewcommand{\mod}{\operatorname{mod }}

\renewcommand{\phi}{\varphi}

\newcommand{\Homog}{\operatorname{Homog}}
\newcommand{\rig}{\operatorname{rig}}
\newcommand{\Frob}{\operatorname{Frob}}
\newcommand{\dR}{\operatorname{dR}}

\newcommand{\newt}{\operatorname{newt}}

\newcommand{\ins}{\subseteq}
\newcommand{\isom}{\cong}
\newcommand{\hodge}{\operatorname{hodge}}

\usepackage[OT2,T1]{fontenc}
\DeclareSymbolFont{cyrletters}{OT2}{wncyr}{m}{n}
\DeclareMathSymbol{\Sha}{\mathalpha}{cyrletters}{"58}

\nocite{*}

\title[Newton strata realization for hypersurfaces]{Newton strata realization for hypersurfaces via
explicit \(p\)-adic cohomology}

\author[Batubara]{Ryan Batubara}
\address{Ryan Batubara, Department of Mathematics, University of California San Diego, 2985 Muir Ln, La Jolla, CA 92093, USA}
\email{rbatubara@ucsd.edu}
\date{\today}

\author[Garzella]{Jack J Garzella}
\address{Jack J Garzella, Department of Mathematics, University of California San Diego, 2985 Muir Ln, La Jolla, CA 92093, USA}
\email{jgarzell@ucsd.edu}
\date{\today}

\author[Huang]{Yongyuan Huang}
\address{Yongyuan Huang, Department of Mathematics, University of California San Diego, 2985 Muir Ln, La Jolla, CA 92093, USA}
\email{yoh011@ucsd.edu}
\date{\today}

\author[Mellberg]{Maximus Mellberg}
\address{Maximus Mellberg, Department of Mathematics, University of California San Diego, 2985 Muir Ln, La Jolla, CA 92093, USA}
\email{mmellber@ucsd.edu}
\date{\today}

\begin{document}

\begin{abstract}
    Let \(X\) be a smooth projective hypersurface over a finite field \(k\) of characteristic \(p\).
    We address the problem of practically computing the zeta function \(Z(X,T)\) of \(X\)
    (equivalently, the point counts \(\#X(\mathbb{F}_{q})\), where \(q = p^{n}\)),
    and we focus on the case when \(7 \leq p < 50\).
    We use the theoretical framework of the variant of Kedlaya's algorithm
    in \cite{AKR}, 
    and we use the technique of \textit{controlled reduction} as described
    in \cite{costa}.
    We define an optimization problem that abstracts the 
    key bottleneck in the implementation
    of controlled reduction.
    An algorithm that solves this problem is called a \textit{reduction policy}.
    We present three reduction policies with different advantages 
    and disadvantages.
    We also present a high-performance implementation of controlled reduction
    that contains GPU-optimized linear algebra code and
    a data structure for linear recurrences that the authors hope can
    be used to study further reduction policies.
    Our algorithms get state-of-the-art performance in many cases;
    for example, we beat \cite{Tuitman} or \cite{Kyng} on many examples of quintic curves,
    while also being able to compute zeta functions of cubic fourfolds when \(p = 7\).
    We also have the first (to our knowledge) systematic computations of zeta functions
    of quintic surfaces.
    We use our implementation to deduce many new explicit examples of 
    varieties with specified Newton polygons,
    including a cubic fourfold which are neither ordinary
    nor supersingular,
    quartic K3 surfaces of various Artin-Mazur heights,
    and quintic surfaces of all possible domino numbers.
\end{abstract}

\maketitle

\section{Introduction}
\subsection{Background}
Let $k=\FF_q$ and $X/k$ be a smooth proper $k$-scheme of finite type of dimension $n$. The \textit{zeta function} of $X$ is the generating series of the number of $\FF_{q^r}$--points on $X$ defined by 
\[
Z(X,T)\coloneq\exp\left(\sum_{r=1}^{\infty}\frac{T^r}{r}\#X(\FF_{q^r})\right);
\] 
it is an invariant of $X$ of substantial interest in number theory and arithmetic geometry. The well-celebrated Weil conjectures \cite{Weil} proven by Deligne \cite{WeilI}\cite{WeilII} encapsulate many key properties of $Z(X,T)$. For example, $Z(X,T)$ is a rational function of the form
\(Z(X,T) = \prod_{i=0}^{2n} P_{i}(x)^{(-1)^{i+1}}\) for \(P_{i} \in \mathbb{Z}[T]\). Let \(P_{i} = \sum_{j} a_{ij}x^{j}\). We define the \textit{\(i\)-th Newton polygon} \(\newt_{i}(X)\) of \(X\) to be the lower convex hull of the points \((j, v_{p}(a_{ij}))\); it is another invariant of \(X\) which is coarser than its zeta function but also of wide interest. In many cases (such as when \(X\) is a curve, abelian variety, or projective hypersurface), there is only one \(i\) for which the factor \(P_{i}\) is nontrivial, in which case we say that \(\newt_{i}(X)\) is \textit{the} Newton polygon of \(X\) and denote it by \(\newt(X)\). A celebrated theorem of Mazur \cite{Mazur} gives a lower bound for \(\newt_{i}(X)\) in terms of another lower convex hull, which is called the \textit{Hodge polygon} and is denoted \(\hodge(X)\).



\subsection{Motivation}
We recall some of the reasons for interest in \(\newt(X)\).

\subsubsection{Other Invariants of \(X\)}

Many other arithmetic invariants of \(X\) are encoded in \(\newt(X)\).
If \(X\) is a curve of genus $g$ or abelian variety of dimension $g$,
the \textit{\(p\)-rank} of \(X\) is 
exactly the length of the slope zero segment of \(\newt(X)\).
Furthermore, \(X\) is called \textit{ordinary} if its Newton polygon
equals its Hodge polygon, and
\textit{almost ordinary} if it has the (in this case, unique)
lowest Newton polygon that is not ordinary.
In fact, being ordinary is equivalent to having \(p\)-rank \(g\),
and being almost ordinary is equivalent to having \(p\)-rank \(g-1\).
If \(X\) is a K3 surface, then by Mazur's theorem and the 
Weil conjectures, the first slope of the Newton polygon
of \(X\) must be equal to \(1 - \frac{1}{h}\), for \(h \in \{1\ldots10\} \cup \infty\).
This \(h\) is called the \textit{Artin-Mazur height} of \(X\) \cite{Artin-Mazur}.
More generally, if \(X\) is any surface, we may characterize its
\textit{domino number} \cite{Joshi-Rajan, Joshi} as the smallest vertical distance between
its Newton polygon and its Hodge polygon. 
We say that \(X\) is \textit{Hodge-Witt} if its domino number is zero. 
For example, a K3 surface is not Hodge-Witt if and only if it has height \(\infty\).
Note that many of the aforementioned notions first appeared with other definitions,
but the Newton polygon provides a clean way to discuss all of them in a unified way.

These constructions can be extended to other families of varieties.
To define an analogue of the \(p\)-rank for \(X\) a cubic threefold,
we must consider the length of the slope one segment of
\(\newt(X)\); the \(p\)-rank of a cubic threefold coincides with the \(p\)-rank of
its intermediate Jacobian \cite{int-jacobian}.
For \(X\) a cubic fourfold, we may define a ``height'' of \(X\) such
that the first slope of the Newton polygon is \(2 - \frac{1}{h}\),
where \(h \in \{1\ldots10\} \cup \infty\).
This height probably deserves to be called the ``Artin-Mazur height'' or
``higher Artin-Mazur height'' of the cubic fourfold; however,
due to the lack of a well-developed theory, we refrain from naming it. Instead, we opt
simply to use the term \textit{Newton height} instead.
One of the key motivations for this work is to lay the groundwork for the development of the theory of
heights of cubic fourfolds.

\subsubsection{\(F\)-singualrities and cubic hypersurfaces}
Newton polygons also carry information about singularities. We refer the reader to \cite{ma-polstra-2021-F-sing-comm-alg}
for an overview of the theory of \(F\)-singularities. We say a ring \(R\) of characteristic \(p\) is \textit{\(F\)-split} if the Frobenius map 
\(F \colon R \xrightarrow{} R\) splits in the category of \(R\)-modules.
\(F\)-splitness is a ``mild singularities'' condition; that is,
if \(R\) is regular, then \(R\) is \(F\)-split, while the converse
does not hold \cite[\S1-2]{ma-polstra-2021-F-sing-comm-alg}.
We say that the scheme \(\Spec R\) is \(F\)-split if \(R\) is.
If \(X\) is a smooth projective K3 surface, that is, \(\omega_{X} \isom \mathcal{O}_{X}\),
we have

\begin{prop}
    \label{prop:fsplit:ordinary}
    Choose some \(R\) such that \(X = \Proj R\).
    Let \(h\) be the Artin-Mazur height of \(X\).
    Then the following are equivalent:
    \begin{enumerate}[(i)]
        \item \(X = \Proj R\) is ordinary
        \item \(h = 1\)
        \item \(\Spec R\) is \(F\)-split.
    \end{enumerate}
\end{prop}
We see that the Newton polygon of \(X\) gives information about
the \(F\)-singularities of (any) affine cone over that variety; we may ask for analogues of Proposition \ref{prop:fsplit:ordinary} for other
families of hypersurfaces. 
Recently, Kawakami and Witaszek \cite{Kawakami-Witaszek} have introduced the notion of 
\textit{higher \(F\)-injectivity}, which gives a natural analogue of \(F\)-splitness.
The following is a folklore question, first asked by Witaszek:

\begin{quest}
    \label{quest:higherfinj:ordinary}
    Let \(X\) be a smooth projective cubic hypersurface of dimension \(d \geq 3\),
    and choose \(R\) such that \(X = \Proj R\).
    Let \(h\) be the Newton height of \(X\).
    Are the following equivalent?
    \begin{enumerate}[(i)]
        \item \(X = \Proj R\) is ordinary.
        \item \(h = 1\)
        \item \(\Spec R\) is \(1-F\)-injective in the sense of Kawakami and Witaszek.
    \end{enumerate}
\end{quest}


Motivated by Question \ref{quest:higherfinj:ordinary} 
and the desire to test conjectures with examples, 
we ask the following much simpler question.



\begin{quest}
    \label{quest:strata:realization}
    Can we produce explicit examples of cubic threefolds (resp{.} cubic fourfolds) \(X\) which have 
    interesting Newton polygons? 
    Especially, can we find explicit examples which are ordinary, and examples which have
    Newton polygon that are as close to ordinary as possible without being ordinary?
\end{quest}

Question \ref{quest:strata:realization} may be called the \textit{Newton strata realization problem},
a reference to the fact that Newton polygons stratify families of varieties in characteristic \(p\) 
(see \cite{de-jong-oort-2000-newton-strata}).
The purpose of this paper is to provide a general (computational) methodology for answering questions
such as Question \ref{quest:strata:realization}. See \ref{subsec:fourfolds} for the prior work on this
question for cubic fourfolds.

\subsection{Kedlaya-type algorithms}
One immediate way of accessing the Newton polygons of \(X\) is via explicitly computing \(Z(X,T)\).
Given an arbitrary smooth projective $k$-scheme $X$ of finite type, explicitly computing $Z(X,T)$ remains an open problem in general. 
There has been a myriad of papers written on computing the zeta functions of varieties of particular types, 
most of which use cohomological methods.
For example, the work of Schoof \cite{Schoof} may be interpreted as using mod-\(\ell\) étale
cohomology, the work of Sperber-Voight \cite{Sperber-Voight} uses Dwork cohomology, and
the work of Harvey, Sutherland, Costa, and others 
\cite{harvey-sutherland-2014-hasse-witt-1, 
    harvey-sutherland-2016-hasse-witt-2,
harvey-massierer-sutherland-2016-hyperell-genus-3,
sutherland-2020-superelliptic,
afp-2022-picard-curves,
costa-harvey-sutherland-2023-plane-quartic}
 uses
\v{C}ech cohomology (via formulas for the Hasse-Witt/Cartier-Manin matrix).
The landmark work of \cite{Kedlaya} for hyperelliptic curves 
initiated the study of computations of \(Z(X,T)\)
using rigid-cohomological methods. 
Kedlaya's algorithm has since been generalized and improved in various ways.
We highlight a few relevant developments.
In \cite{AKR}, Abbott, Kedlaya, and Roe generalize Kedlaya's algorithm
to projective hypersurfaces.
Harvey \cite{Harvey} then improves the complexity in \(p\) to 
\(p^{1 / 2}\) using a technique known as \textit{controlled reduction} in the literature.
Costa and Harvey 
\cite{costa-harvey-unpublished-controlledreduction, costa} 
provide algorithms and an implementation for computing \(Z(X,T)\) in the case of projective hypersurfaces; 
Costa, Harvey, and Kedlaya \cite{CR} further generalize these
algorithms to hypersurfaces in toric varieties, under the
condition that the equations of these varieties satisfy
a nondegeneracy condition (see Definition \ref{nondegenerate}).
Such \textit{Kedlaya-type} algorithms remain the state of the art
for the computation of \(Z(X,T)\) when \(\dim X > 1\).

\subsection{Anatomy of a Kedlaya-type algorithm}\label{anatomy}

Let \(H^{i}_{rig}(X)\) denote the rigid cohomology groups of \(X\).
Each Kedlaya-type algorithm works to compute the action of \(\Frob_{q}\)
on \(H^{i}_{rig}(X)\) by the standard method of linear algebra:
write an explicit basis for \(H^{i}_{rig}(X)\), 
write a formula for the action of \(\Frob_{q}\) on an arbitrary element \(\omega\in H^{i}_{rig}(X)\),
 compute the action of \(\Frob_{q}\) on each of the basis vectors, and write \(\Frob_{q}(\omega)\) 
 as a \(p\)-adic linear combination of the basis elements using cohomological relations via some sort of a reduction algorithm. 
 
There are two main difficulties in any Kedlaya-type algorithms:

\begin{enumerate}[(a)]
    \item The formula for the action of \(\Frob_{q}\) on  \(H^{i}_{rig}(X)\)  is a convergent\footnote{
        overconvergent, even
    }
        \(p\)-adic power series with infinitely
        many nonzero terms. 
        One may apply the reduction algorithm to each term
        in the series, but a computer can only do this
        for a finite subset of the terms.
    \item Applying the cohomological relations naively involves a lot of polynomial
        arithmetic, which makes the algorithm impractical in most cases.
\end{enumerate}

Section \ref{precision} summarizes the approach of \cite{AKR} to address the first problem. In regards to the second problem,
Harvey and Costa in \cite{Harvey, costa, costa-harvey-unpublished-controlledreduction} pioneered the method of
\textit{controlled reduction}, which we recall in Section \ref{subsec:cr}.

\subsection{Our contributions}
Let \(X\) be a smooth projective hypersurface. We answer questions akin to Question
\ref{quest:strata:realization} by developing and implementing fast algorithms for calculating \(Z(X,T)\) and deduce \(\newt(X)\) from the zeta function. 
In particular, we use the machinery of \textit{controlled reduction} due to Harvey and Costa.
Previous work on Kedlaya-type algorithms and controlled reduction has focused on the
complexity properties of the algorithms involved per se. 
We take a different perspective and instead focus on practicalities in the implementation.
In particular, the bottleneck of most controlled reduction algorithms is a series of matrix-vector
multiplications which take place over \(\mathbb{Z}/p^{M}\mathbb{Z}\) for some ``precision'' \(M\).
This is also a bottleneck for practical implementations of \cite{Harvey-Trace-Formula}, which is theoretically very
different but algorithmically similar.
Much work has gone into making the size of these matrices as small as possible and to computing
\(Z(X,T)\) for very large primes \(p\) \cite{harvey-sutherland-2014-hasse-witt-1, CR}.
Furthermore, implementing a Kedlaya-type algorithm entails numerous design choices, each involving
trade-offs. Prior implementations often either adopt ``naive'' default options or make selections tailored to the specific type of variety studied in the accompanying work. As a result, such code tend to be difficult to reuse and slow for other types of inputs.

In this work, for \(X\) a projective hypersurface, we describe an algorithm to compute \(Z(X,T)\) that is equivalent to controlled reduction as presented in \cite[Proposition 1.15]{costa}.
However, we give a mathematical formalism for a key implementation detail of controlled reduction
algorithms, which we call the \textit{reduction policy}. 
We describe the choices in Costa's implementations \cite{controlledreduction,
ToricControlledReduction}, and provide two reduction policies which perform better on our examples;
one which is generic, and one which is suited to hypersurfaces of low degree.
We also develop a data structure which encapsulates the functionality necessary for another
important implementation detail, called a \textit{linear recurrence of matrices}.
We provide several implementations, each suited to various \(X\).
Finally, we provide a few GPU-accelerated linear algebra utilities, including
a matrix multiplication modulo \(p^{m}\) for \(p^{m} < 2^{25}\) 
(which is a wrapper around Nvidia's CUBLAS, similar to the \textsc{Magma} implementation),
and a matrix multiplication modulo \(p^{m}\) for \(p^{m} < 2^{53}\),
which uses Karatsuba's multiplication algorithm.



All of our results are obtained on small computers,
each with no more than 32 GB of memory.
Furthermore, given the motivations in \(F\)-singularities, we focus on medium-low characteristic, whereas previous work
have mostly focused on optimizing performance for large primes; 
in this paper, all computations have \(7 \leq p < 50\), though our algorithms often work beyond
these cases.
In particular, we achieve the first systematic computations (to the knowledge of the authors)
on cubic fourfolds in characteristic \(7\) and quintic surfaces in small characteristics;
we achieve state-of-the-art practical performance on cubic threefolds with few monomials, 
as well as state-of-the-art single-core performance on quartic surfaces with few terms.
Furthermore, we have 
performance competitive with \cite{ToricControlledReduction} on random cubic threefolds,
and both dense and sparse cubic surfaces.
Finally, we have performance that is competitive and often beats Tuitman's algorithm \cite{Tuitman}
and/or Kyng's Harvey Trace
Formula algorithm \cite{Kyng} for smooth plane quintics.

With this performance, our main results are as follows. 
\begin{thm}\label{thm:main}
There exist 
\begin{enumerate}
    \item cubic fourfolds with height 2 in characteristic \(p = 7\),
    \item cubic threefolds that are ordinary and almost ordinary in characteristics \(p = 7, 11,13\),
    \item quartic K3 surfaces of height up to 4 in characteristic \(p = 17, 19, 23\),
    \item quintic surfaces of all possible domino numbers in characteristic \(p = 7, 13, 17\),
    \item plane quintics of all possible \(p\)-ranks for \(p = 7, 11, 13\),
    \item plane sextics of all \(p\)-ranks \(\geq 6\) for p = \(7,11,13\), and 
    \item plane heptics which are ordinary and almost ordinary for \(p = 7,11,13,17,19,23,31\).
\end{enumerate}
\end{thm}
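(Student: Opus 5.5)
This is an existence statement, and the proof is computational: for each item we exhibit an explicit smooth hypersurface \(X\) over a small prime field, compute \(Z(X,T)\), and read off \(\newt(X)\). The steps are as follows.

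\textbf{Step 1: search.} For each family (cubic fourfolds at \(p=7\), cubic threefolds, quartic K3 surfaces, quintic surfaces, plane quintics, sextics and heptics) and each listed prime, we enumerate random sparse polynomials \(f\) of the given degree with coefficients in \(\FF_p\). Sparse inputs make controlled reduction cheapest. We discard any \(f\) that is not smooth, which we check by a Gröbner basis computation of the Jacobian ideal, and we also require the nondegeneracy condition of Definition \ref{nondegenerate} so that the controlled reduction framework of \cite{costa} and \cite[Proposition 1.15]{costa} applies.

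\textbf{Step 2: compute \(\Frob_p\).} For each surviving \(f\), we compute the matrix of \(\Frob_p\) on primitive middle cohomology to a \(p\)-adic precision \(M\). The precision is chosen via the bounds of Section \ref{precision} (following \cite{AKR}) so that the characteristic polynomial is determined exactly after applying the Weil bounds: all roots have absolute value \(p^{(n-1)/2}\), and the functional equation gives the remaining coefficients. Adding back the Tate-class factor coming from the hyperplane section gives \(P_{n-1}\). Its Newton polygon is then the lower convex hull of the points \((j, v_p(a_j))\), and the invariant in each item is read off directly. For K3 surfaces and cubic fourfolds the height \(h\) comes from the first slope, \(1-\frac1h\) or \(2-\frac1h\). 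For curves the \(p\)-rank is the length of the slope-zero segment, and for threefolds it is the length of the slope-one segment. The domino number of a quintic surface is the minimal vertical distance to the Hodge polygon, whose Hodge numbers we know, giving a finite list of possible values to realize. As a sanity check, we verify independently for small cases that the computed zeta function gives the correct point counts \(\#X(\FF_{p^r})\) for small \(r\) by brute force.

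\textbf{Step 3: coverage.} We keep sampling until every claimed Newton stratum is realized. Ordinary examples are generic and appear quickly. Almost-ordinary and higher-height examples lie in strata of small codimension, so we expect roughly a \(1/p\) fraction of random samples to land in each codimension-one stratum; this is enough for height 2 and almost ordinary. Deeper strata, such as K3 height 4 or the extreme domino numbers, are rarer, roughly \(p^{-3}\). For these we would bias the search toward special equations, for instance Fermat-type and diagonal-plus-perturbation polynomials, or families with extra automorphisms, whose Frobenius is more degenerate.

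\textbf{Main obstacle.} The bottleneck is the cubic fourfold at \(p=7\). There the cohomology is large (dimension 23), the required precision is high, and each zeta function costs many matrix–vector products modulo \(p^M\). Finding a height-2 example requires computing on the order of \(p\) or more of them. Making this feasible is exactly what the reduction policies and the GPU linear-algebra routines are for. We would first try the low-degree reduction policy with very sparse cubics, which minimizes the reduction matrices, before resorting to denser random samples.
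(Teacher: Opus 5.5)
Your proposal is essentially the paper's argument: the theorem is proved constructively. Examples are found by fully random sampling (Method 1) and by random perturbations of a special equation (Method 2); each zeta function is computed by controlled reduction at the precision of \cite{AKR}, and the invariant is read off the Newton polygon. The differences are only tactical: the paper reaches K3 height 4 by plain random search after cheaply discarding height-1 candidates with Fedder's criterion, uses perturbations of the Fermat quintic (supersingular for \(p = 7, 13, 17\)) for the quintic surfaces, and needs only \(S\)-smoothness with \(d \geq |S|\) rather than full nondegeneracy, so your extra requirement just shrinks the search space harmlessly.
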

Our proof is constructive; in particular, we have explicit examples for all of the above and we have
calculated the full zeta function for each of our examples. All examples can be found in \cite{zenodo}.

Our implementation in the Julia programming language of the algorithms described in this article as well as computations verifying Theorem \ref{thm:main} can be found in
\[\text{\url{https://github.com/UCSD-computational-number-theory/DeRham.jl} and }\]
\[\text{\url{https://github.com/UCSD-computational-number-theory/GPUFiniteFieldMatrices.jl}.}\]

\subsection{Contents}
We conclude this introduction with an outline of the paper. After recalling the theoretical background for computing the zeta function of smooth projective hypersurfaces using explicit \(p\)-adic cohomological method in Section \ref{sec:griffiths}, we dissect the components of controlled reduction in Section \ref{sec:pole-reduction} and formally define an optimization problem that models controlled reduction in Section \ref{subsec:abstract}. We devote Section \ref{sec:policies} to describe three different reduction policies and Section \ref{sec:algo} to present the GPU-accelerated linear algebra algorithms and data structure for computing linear recurrence of matrices that we implement. Section \ref{sec:newton-realization} demonstrates the practical application of our work in finding varieties with Newton polygons previously unknown in the literature, and Section \ref{sec:performance} compares the performance of our algorithms to existing implementations.  

\subsection{Acknowledgements}
The authors thank Edgar Costa and Kiran S. Kedlaya for helpful conversations, and the maintainers and support staff of the UCSD research cluster for use of their machines. The second author was partially supported by the National Science Foundation Graduate Research Fellowship Program under Grant No. 2038238, and a fellowship from the Sloan Foundation.

\section{Computing the Zeta Functions of Projective Hypersurfaces}\label{sec:griffiths}
We recall the necessary theoretical background (see, e.g., \cite[Section 2]{AKR} and the references therein for further details) and establish our notations. 

\subsection{Some $p$-adic cohomology theories}\label{padic}
Suppose $q=p^n$ elements for some prime number $p$. Let $\Q_q$ be the unramified extension of $\Q_p$ of degree $n$, and $\Z_q$ be the ring of integers of $\Q_q$. Let $\mathcal{X}=\PP^n_{k}$, and $\mathcal{Z}$ be a smooth hypersurface of degree $d$ in $\mathcal{X}$ defined by a homogeneous polynomial $f_{\mathcal{Z}}\in k[x_0,\ldots,x_n]_d$. Berthelot \cite{Berthelot} has developed a Weil cohomology theory known as \textit{rigid cohomology}, which contravariantly associates finite dimensional $\Q_q$-vector spaces $H^i_{\rig}(\cdot)$ to a variety. Let $\mathcal{U}=\mathcal{X}\backslash\mathcal{Z}$. For $m\geq 1$, let $\Homog_m$ denote the free $\Z_q$-module of homogeneous polynomials in $\Z_q[x_0,\ldots,x_n]$ of degree $m$; by convention, we let $\Homog_0=\{1\}$ and $\Homog_m=\emptyset$ for $m<0$. After choosing an arbitrary lift $f\in\Homog_d$ of $f_{\mathcal{Z}}$, let $Z$ be the hypersurface cut out by $f$ in $X=\PP^n_{\Z_q}$ and define $U=X\backslash Z$. (For smooth projective hypersurfaces, the existence of such a lift is guaranteed by lifting the coefficients of $f_{\mathcal{Z}}$.)  By a combination of the Lefschetz hyperplane theorem (see e.g. \cite[Theorem 3.1.17]{positivity}), Weil cohomology theories, comparison theorems due to Shiho \cite{Shiho} and Kato \cite{Kato} between rigid cohomology and algebraic de Rham cohomology via crystalline cohomology, calculations from \cite{AKR}, and work of \cite{Griffiths}, letting $Q(T)=\det(1-q^{-1}T\Frob_q|H^n_{\dR}(U_{\Q_q}))$, we always have 
\[Z(\mathcal{Z},T)=\frac{Q(T)^{(-1)^n}}{\prod_{i=0}^{n-1}(1-q^iT)}.\]
Hence it suffices to compute $Q(T)$ in order to compute the zeta functions of smooth projective hypersurfaces. 

\subsection{Griffiths-Dwork Construction}
Griffiths \cite{Griffiths} gives an explicit construction of $H^n_{\operatorname{dR}}(U_{\Q_q})$ as follows. 
Let $\Omega\coloneq\sum_{i=0}^n (-1)^ix_idx_0\wedge\cdots\wedge\widehat{dx_i}\wedge\cdots\wedge dx_n$, where $\widehat{dx_i}$ means the $dx_i$ is omitted. Griffiths shows that $H^n_{\operatorname{dR}}(U_{\Q_q})$ is isomorphic to the $\Q_q$-vector space generated by $\{g\Omega/f^m|m\geq 1, g\in\Homog_{dm-n-1}\}$ modulo the relations
\begin{equation}\label{Griffiths-Dwork}
    \left\{\left(f\frac{\partial g}{\partial x_i}-mg\frac{\partial f}{\partial x_i}\right)\frac{\Omega}{f^{m+1}}:1\leq m\leq n, g\in\Homog_{dm-n}\right\}.
\end{equation} We refer to (\ref{Griffiths-Dwork}) as the \textit{Griffiths-Dwork relations}. Hence every element $\omega\in H^n_{\dR}(U_{\Q_q})$ can be represented by $g\Omega/f^m$ with $m$ minimal and $g\in\Homog_{dm-n-1}$; we refer to such an integer $m$ as the \textit{pole order} of $\omega$.  In fact, the basis elements of pole order \(m\) 
span the \(m\)-th graded piece of the Hodge filtration of \(H^n_{\dR}(U_{\Q_q})\) \cite[\S8]{Griffiths}.
Thus, the maximum pole order in any bases of \(H^n_{\dR}(U_{\Q_q})\) is \(n\).
As \cite[Remark 3.2.5]{AKR} suggests and also with the anatomy of a Kedlaya-type algorithm described in \ref{anatomy} in mind, Griffiths's description of $H^n_{\dR}(U_{\Q_q})$ gives a natural way of computing a $p$-adic approximation of the \textit{Frobenius matrix}, i.e. the matrix representation $F$ of $\Frob_q|H^n_{\dR}(U_{\Q_q})$ with respect to an explicitly computable basis:
\begin{enumerate}
    \item For $m=1,\ldots,n$, descend a basis for $\Homog_{dm-n-1}$ to a basis for $H^n_{\dR}(U_{\Q_q})$.
    \item Compute the action of Frobenius on basis elements of $H^n_{\dR}(U_{\Q_q})$ expressed as truncated power series.
    \item Apply the relations (\ref{Griffiths-Dwork}) to reduce the pole order of terms in the power series obtained from (2). This step is commonly referred as \textit{pole reductions}. 
\end{enumerate}

We assume $q=p$ from now on and denote $\Frob_p$ by $\sigma$. By convention, the natural numbers include \(0\). For \(\beta=(\beta_0,\ldots,\beta_{n})\in\N^{n+1}\), let \(x^{\beta}\coloneq x_0^{\beta_0}\cdots x_n^{\beta_n}\). We also let \(\partial_i f \coloneq \partial f/\partial x_i\).

\subsection{Computing the basis of cohomology}\label{basis}
For the sake of computational efficiency, we prefer having a \textit{monomial} basis for $H^n_{\dR}(U_{\Q_p})$. 
Following Griffiths's construction, computing a monomial basis $B$ for $H^n_{\dR}(U_{\Q_p})$ amounts to computing a monomial basis for the cokernel of the following linear transformation:
\begin{align*}
M_{k}: (\Homog_{k-(d-1)})^{\oplus n+1}&\to \Homog_{k}\\
(\mu_0,\ldots,\mu_n)&\mapsto \sum_{i=0}^{n}\mu_i\partial_i f 
\end{align*}
for $k=dm-n-1$, where $m=1,\ldots,n$.

\subsection{Action of Frobenius}\label{frobenius}
We can express the action of Frobenius on a basis element $x^{\beta}\Omega/f^m$ of $H^n_{\dR}(U_{\Q_p})$ via a series expansion as follows. A calculation using geometric series yields
\begin{equation}\label{frob} 
\sigma\left(\frac{x^{\beta}\Omega}{f^m}\right)=p^n\frac{x^{p(\beta+1)-1}\Omega}{f^{pm}}\sum_{i\geq 0}\binom{-m}{i}\left(\frac{\sigma(f)-f^p}{f^p}\right)^i.
\end{equation}
The number of terms on the RHS of (\ref{frob}) visibly depends on $p$, and this is one of the primary reasons that the run-time of the algorithm from \cite{AKR} is exponential in $p$. Using an insight of Harvey \cite[Proposition 4.1]{Harvey}, one can rewrite (\ref{frob}) in a way such that the number of terms in the series expansion of \(\sigma(x^{\beta}\Omega/f^m)\) does not depend on $p$.  
\begin{prop}
Let $x^{\beta}\Omega/f^m$ be an integral basis element of $H^n_{\dR}(U_{\Q_p})$. Let $N\coloneq N_m$, the series order, and $s=N+m-1$ and $C_{i,\alpha}$ be the coefficient of $x^{\alpha}$ in $f^i$. For $0\leq i<N$, define 
\[D_{i,m}\coloneq \sum_{i=j}^{N-1}(-1)^{i+j}\binom{-m}{i}\binom{i}{j}.\] We have 
\[p^{m-n-1}\sigma\left(\frac{x^{\beta}\Omega}{f^m}\right)\equiv \sum_{j=0}^{N-1}\sum_{|\alpha|=dj}p^{m-1}(D_{j,m}C_{j,\alpha}\operatorname{ mod }p^s)\frac{x^{p(\beta+\alpha+1)}\Omega}{f^{p(m+j)}x^{1}}\mod p^{r_m}.\]
\end{prop}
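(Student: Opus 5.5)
The plan is to start from the series expansion (\ref{frob}) and reorganize it following Harvey's insight \cite[Proposition 4.1]{Harvey}. Write \(\Delta \coloneq \sigma(f) - f^p\). Since \(\sigma\) acts on coefficients by the Frobenius lift and raises variables to their \(p\)-th powers, \(\Delta\) is divisible by \(p\). Hence the \(i\)-th term \(\binom{-m}{i}\Delta^i/f^{p(m+i)}\) is divisible by \(p^i\), and I will truncate the sum at \(i<N\). I will take \(N=N_m\) to be chosen, as in \cite{AKR}, so that the discarded tail is \(0 \bmod p^{r_m}\) after accounting for the prefactor \(p^{m-n-1}\cdot p^n = p^{m-1}\) and the denominators that later appear in pole reduction. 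This is the precision bookkeeping of Section~\ref{precision}, and I would simply invoke it.

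The key algebraic step removes the dependence on \(p\) in the number of terms. I will expand \(\Delta^i = \sum_{j=0}^{i}\binom{i}{j}\sigma(f)^j(-f^p)^{i-j}\). Multiplying by \(f^{-p(m+i)}\), the \(j\)-th summand becomes \((-1)^{i-j}\binom{i}{j}\sigma(f)^j/f^{p(m+j)}\), so the power \(f^{p(i-j)}\) cancels. Swapping the order of summation (\(0\le j\le i<N\)) gives
\[
\sum_{j=0}^{N-1}\Bigl(\sum_{i=j}^{N-1}(-1)^{i+j}\tbinom{-m}{i}\tbinom{i}{j}\Bigr)\frac{\sigma(f)^j}{f^{p(m+j)}}=\sum_{j=0}^{N-1}D_{j,m}\frac{\sigma(f^j)}{f^{p(m+j)}}.
\]
This is now a sum of \(N\) terms with denominators \(f^{p(m+j)}\) only. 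Next I expand \(\sigma(f^j)=\sum_{|\alpha|=dj}\sigma(C_{j,\alpha})x^{p\alpha}\). With \(q=p\) we may take \(\sigma\) to fix \(\Z_p\), so this becomes \(\sum C_{j,\alpha}x^{p\alpha}\). Combining with the monomial \(x^{p(\beta+1)-1}\) gives \(x^{p(\beta+\alpha+1)}/x^{1}\), which is the stated form.

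The remaining issue, and the main obstacle, is justifying the reduction of \(D_{j,m}C_{j,\alpha}\) modulo \(p^s\) with \(s=N+m-1\) rather than modulo \(p^{r_m}\). The regrouping has destroyed the visible divisibility by \(p^i\), since the individual terms \(\sigma(f^j)/f^{p(m+j)}\) are no longer small. The point is that an error of size \(p^s\) in a coefficient of pole order \(p(m+j)\) is multiplied by \(p^{m-1}\). After reduction to pole order \(\le n\), the accumulated denominators are bounded by roughly \(p^{\lfloor \log_p\rfloor}\)-type factorial contributions \(\prod (pk)\) for \(k\le m+N\). I would take these denominator bounds from \cite[Proposition~3.4.6]{AKR} or Harvey's analogous lemma, and check that \(s+m-1\) minus the loss is still \(\ge r_m\), which is presumably how \(N_m\) and \(r_m\) were defined. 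Verifying that these exponent inequalities match the paper's conventions is the delicate part; the algebraic identity above is routine.
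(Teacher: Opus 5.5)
Your algebra is correct and matches Harvey's regrouping, which is what the cited \cite[Lemma 1.10]{costa} rests on; the paper's proof is only that citation. Expanding $\Delta^i$ binomially, cancelling $f^{p(i-j)}$, swapping the sums to get the integers $D_{j,m}$, and writing $\sigma(f^j)=\sum_\alpha C_{j,\alpha}x^{p\alpha}$ shows that, before any coefficient is reduced, the right-hand side is exactly $p^{m-n-1}$ times the truncation of (\ref{frob}) to $i<N$. The gap is in the precision step you flagged, and the bound you describe would not close it. ``$p^{\lfloor\log_p\rfloor}$-type'' and ``factorial contributions $\prod(pk)$'' are very different bounds. The factorial loss comes from dividing by the pole order at each Griffiths--Dwork or $R_{u,v}$ step. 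For an integral form of pole order $p(m+i)$ it is $v_p\bigl((p(m+i)-1)!\bigr)-v_p\bigl((n-1)!\bigr)$, and $v_p\bigl((p(m+i)-1)!\bigr)\ge m+i-1$. This essentially cancels the whole gain $p^{m-1+i}$. With it you could not even justify discarding the tail, let alone show that $s+m-1$ minus the loss is at least $r_m$. The factorial loss only governs the working precision $M$ of Section~\ref{precision}, i.e.\ how many digits the intermediate arithmetic must carry; it is not what proves the congruence.

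What you need is the improved denominator bound of \cite{AKR} for the cohomology class of an integral form of pole order $k$. This is a nondecreasing function $b(k)$ of logarithmic size in $k$, and it is what makes a finite $N_m$ possible at all. With it the step is routine:
\begin{itemize}
\item Replacing $D_{j,m}C_{j,\alpha}$ by its residue modulo $p^s$ changes the right-hand side by $p^{m-1+s}$ times an integral form of pole order $p(m+j)\le p(m+N-1)$.
\item The first discarded tail term, $i=N$, is $p^{m-1+N}$ times an integral form of pole order $p(m+N)$.
\end{itemize}
Since $s=N+m-1\ge N$ and $b$ is nondecreasing, $m-1+s-b(p(m+j))\ge m-1+N-b(p(m+N))$. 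So the same estimate that lets you drop the term $i=N$ when choosing $N_m$ also shows that reducing modulo $p^s$ is invisible modulo $p^{r_m}$. Your worry that the regrouping ``destroyed the visible divisibility by $p^i$'' does not matter here. The perturbation is divisible by $p^{m-1+s}$ on its own, however large the regrouped terms $\sigma(f^j)/f^{p(m+j)}$ are.
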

\begin{proof}
See \cite[Lemma 1.10]{costa}. 
\end{proof}

\begin{rem}
The notations \(N_m\) and \(r_m\) will be explained in \ref{precision}.  
\end{rem}

\subsection{Precision}\label{precision} 
Since the coefficients of $F$ are in $\Q_p$, we need to compute the entries of $F$ to some sufficient finite precision such that one can \textit{provably} recover the correct integer coefficients of $Q(T)$ from a $p$-adic approximation of $F$. 

\begin{defi}
For $x, \overline{x} \in\Q_p$, we say an approximation $\overline{x}$ of $x$ has \textit{absolute p-adic precision} $N$ if $v_p(x-\overline{x})=N$. Suppose $x=p^{k}u$ with $k=v_p(x)$ and $u\in\Z_p^{\times}$. The \textit{relative $p$-adic precision} of the approximation $x+p^N\Z_p$ is 
$\rho(x;N)\coloneq \max\{0,N-v_p(x)\};$ equivalently, $\rho(x;N)$ is the largest natural number $r$ such that $u=p^{-k}x$ is determined modulo $p^r$, i.e. we know the first $r$ $p$-adic digits of $u$. 
\end{defi}

There are two aspects to the precision problem:
\begin{enumerate}
\item the $p$-adic precision needed in an approximation of $F$ to uniquely determine its characteristic polynomial, and 
\item the number of terms needed in the series expansion of the Frobenius action on differentials (c.f. \ref{frob}) in order to obtain a specific precision on the resulting Frobenius matrix.
\end{enumerate}
The first question is essentially answered by the Riemann hypothesis; see, e.g. \cite[Lemma 1.2.3 and \S4.3]{cyclicthreefold}. A helpful ingredient here is a classical result known as ``Newton over Hodge'' due to Mazur \cite{Mazur}, which relates the Hodge filtration to $p$-divisibility of the Frobenius matrix. In summary, Mazur's theorem says that 
$p^{n-m}|p^{-1}\sigma\left(\frac{x^{\beta}\Omega}{f^m}\right);$ also see \cite[\S3.3]{cyclicthreefold} for more details. Combined with \cite[Lemma 1.2.3]{cyclicthreefold}, one can deduce the \textit{relative} $p$-\textit{adic precision} $r_m$ that one needs to compute for $\sigma(x^{\beta}\Omega/f^m)$. We refer to $r_m$ as the \textit{relative precision} for the basis element $x^{\beta}\Omega/f^m$, and $r_m$ depends both on the pole order $m$ as well as the characteristic of the base field.


On the other hand, the analysis of how many terms in the series expansion we need in order to obtain the desired relative precision is a very delicate matter, and is one of the core difficulties in the study of explicit $p$-adic cohomological methods. 
The best known theoretical result in this direction is attributed to \cite[Proposition 3.4.6 and Proposition 3.4.9]{AKR}. 
We let $N_m$ denote the number of terms in the series expansion of $\sigma(x^{\beta}\Omega/f^m)$ in (\ref{frob}) that we need to compute $\sigma(x^{\beta}\Omega/f^m)$ to relative precision $r_m$; we refer to $N_m$ as the \textit{series order} of the basis element $x^{\beta}\Omega/f^m$, and $N_m$ depends on $m, r_m$, and $p$. 
\cite[Algorithm 3.4.10]{AKR} gives an algorithm for computing $N_m$; 
to the best knowledge of the authors, this algorithm has not been implemented.
We provide an implementation that calculates these precisions for an arbitrary projective
hypersurface.

Finally, for each $m=1,\ldots,n$, given $r_m$ and $N_m$, let $s_m \coloneq N_m+m-1$, we define the \textit{algorithm precision} to be 
$M\coloneq\max_{i=1,\ldots,n}\{r_m+v_p((ps_m-1)\!)-m+1\}.$ 
More specifically, this means that we are doing linear algebra over the ring $\Z_p/p^M \Z_p\cong\Z/p^M\Z$ in each of the step in the controlled reduction algorithm (c.f. \cite[1.5.1 Step 1]{costa}). The definition of $M$ will become apparent later. 



\section{Pole reduction}\label{sec:pole-reduction}
\subsection{Controlled Reduction}\label{subsec:cr}

To express $p^{-1}\sigma(x^{\beta}\Omega/f^m)$ as a $\Q_p$-linear combination of basis elements $B$ of $H^n_{\dR}(U_{\Q_p})$, we need to employ certain pole reduction algorithms to reduce the pole order of elements in the cohomology group using the Griffiths-Dwork relations (\ref{Griffiths-Dwork}). A straight-forward application of (\ref{Griffiths-Dwork}) is done in \cite{AKR}, but it can often turn a sparse form into a dense form. We now recall the method of \textit{controlled reduction}  \cite[\S 1.4]{costa}, a variation of \cite{AKR} that improves the time dependency on $p$ from that of $p^n$ in \cite{AKR} to $p^{1+\epsilon}$.  Controlled reduction is a family of algorithms,
which, generally speaking, works by applying the Griffiths-Dwork
relations (\ref{Griffiths-Dwork}) in a clever way that reduces the computation
from polynomial arithmetic to matrix-vector multiplication
and matrix addition\footnote{
    In theory, it reduces the problem to polynomial
    evaluation and matrix-vector multiplication,
    and one uses the fast-evaluation techniques of 
    \cite{BGS} to reduce the fast evaluation to matrix addition.
}.

Before we state the main result, we recall a technical definition. 

\begin{defi}\label{nondegenerate}
For $S\subseteq\{0,\ldots,n\}$, let 
$J_S\coloneq \langle \partial_i f\rangle_{i\in S} \oplus \langle x_i\partial_i f\rangle.$
We say that the hypersurface defined by the homogenous polynomial $f$ is $S$-\textit{smooth} if $J_S$ defines the empty scheme. We say $f$ is \textit{nondegenerate} if we can take $S=\emptyset$.
\end{defi}

\begin{rem}
When $S=\{0,\ldots,n\}$, $f$ is $S$-smooth is equivalent to $f$ is smooth. 
\end{rem}

\begin{prop}[Controlled Reduction]  \label{CR lemma}
Assume $f$ is $S$-smooth with $d\geq |S|$. Let $x^S\coloneq\prod_{i\in S}x_i$. Let $u=(u_i)\in \N^{n+1}$ and $v=(v_i)\in \N^{n+1}$ with $|v|=d$. Suppose further that for $i\in S$, if $v_i=0$, $u_i=0$. There is a $\Z_p$-linear map 
\[R_{u,v}:\Homog_{dn-n}\to\Homog_{dn-n}\]
such that for $g\in\Homog_{dn-n}$, 
\begin{equation}\label{Ruv}
m\frac{x^{u+v}g\Omega}{x^S f^{m+1}}\equiv x^u\frac{R_{u,v}(g)\Omega}{x^S f^m}\text{ in }H^n_{\dR}(U_{\Q_p}).
\end{equation}
Furthermore, setting $u=(x_0,\ldots,x_n)$, the linear map can be represented as a $\binom{dn}{n}\times\binom{dn}{n}$ matrix with entries being linear or constant polynomials in $\Z_p[x_0,\ldots,x_n]$. 
\end{prop}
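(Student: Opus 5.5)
The plan is to construct \(R_{u,v}\) in two steps. First, a purely algebraic decomposition of \(x^{v}g\) inside the ideal \(J_S\). Second, an application of the Griffiths--Dwork relations (\ref{Griffiths-Dwork}) to each summand of that decomposition. The linear dependence on \(u\) will come out of differentiating the monomial \(x^{u}\).

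\emph{Step 1: \(J_S\) contains everything of the relevant degree.} Let \(J_S=\langle \partial_i f\rangle_{i\in S}+\langle x_i\partial_i f\rangle_{i\notin S}\). This ideal is generated by \(n+1\) forms, of degree \(d-1\) for \(i\in S\) and degree \(d\) for \(i\notin S\). By \(S\)-smoothness they define the empty scheme in \(\PP^n\), so they form a regular sequence. The quotient is therefore a complete intersection with Hilbert series \(\prod_i(1-t^{\deg_i})/(1-t)^{n+1}\). Its top nonzero degree (the socle degree) is
\[|S|(d-2)+(n+1-|S|)(d-1)=(n+1)(d-1)-|S|.\]
Hence \(J_S\) contains every form of degree at least \(dn-n+d-|S|\). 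Since \(x^vg\) has degree \(dn-n+d\), we can write
\[x^{v}g=\sum_{i\in S}a_i\,\partial_i f+\sum_{i\notin S}b_i\,x_i\partial_i f,\qquad \deg a_i=dn-n+1,\ \deg b_i=dn-n.\]
To get integrality over \(\Z_p\), not just \(\Q_p\), I would choose these \(a_i,b_i\) via a fixed linear section of the surjection \(\Homog_{dn-n+1}^{|S|}\oplus\Homog_{dn-n}^{n+1-|S|}\to\Homog_{dn-n+d}\). This assumes, as in \cite{costa}, that smoothness holds over \(\Z_p\) as well; in any case this is where the \(\Z_p\)-linearity of \(g\mapsto(a_i,b_i)\) comes from. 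I would in fact refine the decomposition monomial by monomial. For \(i\in S\) with \(v_i=0\), we want \(a_i\) to carry a factor \(x_i\) whenever needed, which is possible because \(d\ge |S|\) leaves slack in the degree bound above.

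\emph{Step 2: reduce the pole order.} Multiply the decomposition by \(x^{u}/(x^Sf^{m+1})\). For \(i\notin S\), apply (\ref{Griffiths-Dwork}) with \(h=x^{u}x_ib_i/x^S\), noting \(x_i\nmid x^S\), to get
\[m\,\frac{x^{u}b_i x_i\partial_i f\,\Omega}{x^S f^{m+1}}\equiv \frac{\partial_i(x^{u}x_ib_i)\,\Omega}{x^S f^{m}}=x^{u}\frac{\bigl((u_i+1)b_i+x_i\partial_i b_i\bigr)\Omega}{x^S f^m}.\]
For \(i\in S\), take \(h=x^{u}a_i/x^S\). This is a genuine polynomial times \(x_i^{-1}\), so we must check that differentiating produces only terms of the form \(x^{u}(\cdot)/x^S\). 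If \(u_i\ge1\) we have \(v_i>0\), the factor \(x_i\) is absorbed, and \(\partial_i\) yields \(x^u\bigl((u_i-1)a_i/x_i+\partial_i a_i\bigr)\)-type terms. If \(u_i=0\), the hypothesis forces \(v_i=0\), and then the refinement from Step 1 lets us take \(x_i\mid a_i\). Either way \(\partial_i h=x^{u}c_i/x^S\), where \(c_i\in\Homog_{dn-n}\) is a \(\Z_p\)-linear function of \(a_i\) whose coefficients are affine in \(u_i\). Summing over \(i\) and setting \(R_{u,v}(g)=\sum_i c_i\) gives (\ref{Ruv}).

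Since every coefficient is of the form (constant) or (constant)\(\cdot u_i\), treating \(u=(x_0,\dots,x_n)\) as formal variables shows that \(R_{u,v}\) is a \(\binom{dn}{n}\times\binom{dn}{n}\) matrix, because \(\dim\Homog_{dn-n}=\binom{dn}{n}\), with entries linear or constant in the \(u_i\). I expect the main obstacle to be Step 2 for \(i\in S\), where the division by \(x^S\) must be handled. The subcase \(u_i=0=v_i\) is exactly where the hypothesis on \((u,v)\) and the inequality \(d\ge|S|\) must be used, to guarantee that the decomposition can be chosen with \(x_i\mid a_i\) while remaining \(\Z_p\)-linear in \(g\). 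I would first try to obtain this by performing the decomposition of Step 1 for \(x^{v}g/x^{S'}\), where \(S'=\{i\in S: v_i=0\}\) (which makes sense degree-wise because \(|S'|\le|S|\le d\)), and then multiplying back.
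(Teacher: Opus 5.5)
Your architecture is the paper's: decompose in $J_S$, apply (\ref{Griffiths-Dwork}) termwise, and read off the affine dependence on $u$ from the product rule. But Step 1 decomposes the wrong polynomial, and Step 2 breaks as a result.

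Writing $x^vg=\sum_{i\in S}a_i\partial_if+\sum_{i\notin S}b_ix_i\partial_if$ gives no control over whether $x^S$ divides the $a_i,b_i$. So the $h=x^ua_i/x^S$ and $h=x^ux_ib_i/x^S$ you feed into (\ref{Griffiths-Dwork}) are in general not polynomials. They are not merely ``a polynomial times $x_i^{-1}$'': every $x_j$, $j\in S$, with $u_j=0$ can survive in the denominator. The relation is only available for polynomial $h$. With such denominators you obtain an identity on $U\cap\{x^S\neq 0\}$, and nothing in your argument brings it back to $H^n_{\dR}(U_{\Q_p})$.

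The output also fails to lie in $\Homog_{dn-n}$. For $i\in S$ one has
\[\frac{x^S}{x^u}\,\partial_i h=\frac{(u_i-1)a_i+x_i\partial_ia_i}{x_i},\]
which is a polynomial only if $x_i\mid a_i$ or $u_i=1$. Your assertion that for $u_i\ge1$ ``the factor $x_i$ is absorbed'' is not right, because that $x^u$ is precisely what must stay in front in (\ref{Ruv}).

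The missing idea, which is what the paper does following \cite{costa}, is to decompose $x^vg/x^S$ instead:
\[\frac{x^vg}{x^S}=\sum_{i\in S}g_i\,\partial_if+\sum_{i\notin S}x_ig_i\,\partial_if.\]
This is possible by your own (correct) socle computation, which is the justification the paper leaves implicit. Indeed $\deg(x^vg/x^S)=dn-n+d-|S|=(n+1)(d-1)-|S|+1$, exactly one more than the socle degree, so there is no slack. Now $x^ug_i$ and $x^ux_ig_i$ are honest polynomials, and
\[R_{u,v}(g)=x^S\Bigl(\sum_{i\in S}\frac{u_ig_i+x_i\partial_ig_i}{x_i}+\sum_{i\notin S}\bigl((u_i+1)g_i+x_i\partial_ig_i\bigr)\Bigr)\]
lies in $\Homog_{dn-n}$ because $x^S/x_i$ is a monomial.

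This requires $x^S\mid x^vg$, and that is the real role of $d\ge|S|$: it is what allows $|v|=d$ with $v_i\ge1$ for all $i\in S$. Your $S'$-refinement goes the wrong way. For $i\in S'$ you have $x_i\nmid x^v$, so $x^vg/x^{S'}$ is not a polynomial for general $g$. In fact, if $u_i=v_i=0$ for some $i\in S$, the left-hand side of (\ref{Ruv}) is not even regular on $U$ unless $x_i\mid g$. So no clever choice of decomposition handles that subcase; the construction implicitly needs $x^S\mid x^vg$.
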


\begin{proof}
See \cite[Proposition 1.15]{costa}. 
\end{proof}

We make explicit the definition of $R_{u,v}(g)$ and how to compute a matrix representation of $R_{u,v}$ following the proof given in \cite[Proposition 1.15]{costa}. Under the hypothesis in Proposition \ref{CR lemma}, we can find $g_i\in \Homog_{dn-n-|S|+1}$ such that 
\[\frac{x^vg}{x^S}=\sum_{i\in S}g_i\partial_if + \sum_{i\notin S}x_ig_i\partial_if.\]
After applying the Griffiths-Dwork relations and product rule, we have 
\begin{align*}
    m\frac{x^{u+v}g}{x^S}\frac{\Omega}{f^{m+1}}
    &\equiv x^u\left(\sum_{i\in S}\frac{u_ig_i+x_i\partial_ig_i}{x_i}+\sum_{i\notin S}(u_i+1)g_i+x_i\partial_ig_i\right)\frac{\Omega}{f^m}.
\end{align*}
Now let 
\[R_{u,v}(g)\coloneq x^S\left(\sum_{i\in S}\frac{u_ig_i+x_i\partial_ig_i}{x_i}+\sum_{i\notin S}(u_i+1)g_i+x_i\partial_ig_i\right),\] we then have \[m\frac{x^{u+v}g}{x^S}\frac{\Omega}{f^{m+1}}\equiv x^u\frac{R_{u,v}(g)}{x^S}\frac{\Omega}{f^m}.\]


\begin{rem}
We refer to the vector $v$ in $R_{u,v}$ as the \textit{direction of reduction}, c.f. Section \ref{sec:policies}. 
\end{rem}

\begin{rem}
The implementation \cite{pycontrolledreduction} only supports the case when $S=\{0,\ldots,n\}$, making it unusable for cubic threefolds and fourfolds. 
\end{rem}

For each fixed constant vector $v\in\N^{n+1}$, let $u=(u_0,\ldots,u_n)\in\N^{n+1}$. The $R_{u,v}$ linear map can be a priori represented by a matrix of linear polynomials in $\Z_p[x_0,\ldots,x_n]$; however, for better computational efficiency, we instead think of $R_{u,v}$ as being algorithmically represented by $n+2$ constant coefficient matrices $A_0,\ldots A_{n+1}$ with $R_{u,v}=\sum_{i=0}^{n}A_{i} u_i + A_{n+1}$. 

In order to apply Proposition \ref{CR lemma} to reduce the pole order of a monomial differential $x^{\beta}\Omega/f^m$ in practice, we need a consistent way of choosing $g\in\Homog_{dn-n}$ and some $u\in\N^{n+1}$ such that
\begin{equation}\label{u}
x^{\beta}=x^u g.
\end{equation}

In our implementation, as in Costa's implementation of \cite{costa}, the vector $u$ in (\ref{u}) is determined by a straight-forward algorithm as follows. 

\begin{algorithm}
\caption{Tweak$(I,m)$: remove $m$ units from the front of integer vector $I$}\label{algo:tweak}
\begin{algorithmic}[1]
\Require $I$, a vector of nonnegative integers; $m \ge 0$
\Ensure $I'$ after removing an integer vector $J$ with $|J| = \min(m, |I|)$ from the front
\State $count \gets 0$
\State $o \gets m$
\State $I' \gets \text{copy}(I)$
\While{$m > 0$}
    \For{$i \gets 1$ to $\mathrm{length}(I')$}
        \If{$I'[i] > 0$}
            \State $I'[i] \gets I'[i] - 1$
            \State $m \gets m - 1$
            \State \textbf{break}
        \EndIf
    \EndFor
    \If{$count > \mathrm{length}(I') \cdot o$}
        \State \Return $I'$
    \EndIf
    \State $count \gets count + 1$
\EndWhile
\State \Return $I'$
\end{algorithmic}
\end{algorithm}

The output of $\text{Tweak}(\beta, dn-n)$ gives the $u$ in (\ref{u}).

\subsection{Evaluation of $R_{u,v}$-matrices} 
\label{subsec:eval:ruv}
When we apply controlled reduction for the purposing of reducing the pole order of differentials via the $R_{u,v}$-matrices in practice, we write $u=(x_0,\ldots,x_n)+yv$ for some fixed constant vectors $(x_0,\ldots,x_n),v\in\N^{n+1}$, and variable $y\in\N$. Let $M(y)\coloneq R_{(x_0,\ldots,x_n)+yv,v}=Ay+B$, where $A$ and $B$ are constant coefficient matrices. To reduce the pole order of a differential $\omega$ by $k$, we need to evaluate the expression 
\begin{equation}\label{fasteval}
M(0)\cdots M(k)w,
\end{equation}
where $w$ is the vector of coefficients of some $g$ as in (\ref{Ruv}). The question of how one chooses $v$ will be addressed in Section \ref{sec:policies}. For now, we focus on how to efficiently evaluate (\ref{fasteval}). A priori, (\ref{fasteval}) can be evaluated via $k$ matrix multiplications of matrices of size $\binom{dn}{n}\times\binom{dn}{n}$ and a matrix-vector multiplication. Instead, (\ref{fasteval}) can be evaluated via $O(k)$ matrix-vector multiplications and $O(k)$ matrix additions as follows. 

\begin{algorithm}
\caption{Fast Evaluation of (\ref{fasteval})}\label{horner}
\begin{algorithmic}[1]
 \State $M \gets kA+B$
 
  \For{$i = k$ \textbf{downto} $0$}
   \State $w \gets Mw$
   \State $M \gets M-A$
  \EndFor
  \State \Return $w$
\end{algorithmic}
\end{algorithm}

We introduce a customized data structure for an efficient implementation of Algorithm \ref{horner} in Section \ref{PEP}.

\subsection{Outline of the pole reduction process} 
For each element in a monomial basis $B$ of $H^n_{\dR}(U_{\Q_p})$ as described in Section \ref{basis}, recall that our objective is to express (\ref{frob}) as a $\Z_p$-linear combination of elements of $B$ by means of a pole reduction procedure (elements in $B$ have pole order at most $n$). There are two main steps for how this is done in controlled reduction: 
\begin{enumerate}
\item [(0)] Apply Algorithm \ref{algo:tweak} to express the terms in the RHS of (\ref{frob}) in the format of the LHS of (\ref{Ruv}). 
\item [(1)] Iteratively apply the $R_{u,v}$-matrices to reduce the pole order of the terms in (\ref{frob}) to exactly $n$. 
\item [(2)] Directly apply the Griffiths-Dwork relation (\ref{Griffiths-Dwork}) as in \cite{AKR} to express the output from Step 1 as a linear combination of elements of $B$.
\end{enumerate}
Most of the computational complexities lies in Step 2, which we repackage as an abstract optimization problem in Section \ref{subsec:abstract} and propose various solutions in Section \ref{sec:policies}.

\subsection{The Abstract Reduction Problem}\label{subsec:abstract}

We define an optimization problem that models the problem of controlled reduction.
Recall our convention from earlier that the natural numbers include zero.

Let \(d, N \in \mathbb{N}\). We call \(d\) the \textit{degree} and \(N\) the \textit{target norm}. 
Let \(|\cdot|\) denote the (restriction of the) $\ell_1$ \textit{norm} on \(\mathbb{N}^{n}\), i.e., 
 for \(u = (u_{0}, \ldots, u_{n}) \in \mathbb{N}^{n}, |u| = u_{0} + \ldots + u_{n}\).

At the start of the problem, we are given
\(U\), a finite list whose elements lie in \(\mathbb{N}^{n}\)
with the following property:
for every \(u \in U\), 
\(N = |u| - md\) for some \(m \in \mathbb{N}\).
We are also given, for each \(u\in U\) and \(k\in\N\), 
a finite set \(V(u,k) \ins \{v \in \mathbb{N}^{n+1} | |v| = d\}\).
We call \(V(u,k)\) the set of \textit{reduction direction choices}.

\newcommand{\all}{\operatorname{all}}
\newcommand{\full}{\operatorname{full}}

The following are the basic examples of \(V(u,k)\);
others are variants of these.

\begin{exam}
    Let \(V_{\all}(u,k)\) be the set \(\{v \in \mathbb{N}^{n+1} \colon |v| = d, u - kv \in
    \mathbb{N}^{n+1}\}\).
    That is, the set of all elements of $\ell_1$ norm $d$,
    with the condition that the entries remain nonnegative after subtracting by \(v\) $k$ times.
\end{exam}

\begin{exam}
    Let \(V_{\full}(u,k)\) 
    be the subset of \(V_{\all}(u,k)\) with the additional condition
    that if \(u_{i} \neq 0\), then \(v_{i} \neq 0\).
\end{exam}

\begin{exam}
    Let \(S \ins \{0, \ldots, n\}\).
    Let \(V_{S}(u,k)\)
    bet the subset of \(V_{\all}(u,k)\) with the additional 
    condition that if \(i \in S\), then 
    if \(u_{i} = 0\), \(v_{i} = 0\).
\end{exam}

The process of controlled reduction can be formulated as a 1-player game. 
We score the game via three cost constants \(M, D\), and \(S\),
for which we assume that \(D \ll S \ll M\).
The interpretation of the constants is as follows:
\(M\) denotes the cost of reducing one term by one pole order.
This is dominated, in practice, by the cost of a single
matrix-vector multiplication (there is also a matrix subtraction,
but in practice the memory bottleneck of matrix-vector multiplication
takes a lot more time, even though technically both have
\(O(s^{2})\) operations, where \(s\) is the side length of the matrices).
\(S\) is the startup cost to compute the \(A\) and \(B\) matrices
as described in Section \ref{subsec:eval:ruv}.
This consists of matrix additions and scalar multiplications, which 
like the subtraction are much faster than a matrix-vector multiplication.
Finally, \(D\) is the cost of adding two vectors which correspond to duplicates from \(U\), which 
in theory is a single vector addition, but in practice again memory
overhead dominates.

With that in mind, we may formulate our game as follows:
On each turn, the player does one of two moves, where each move accumulates a cost, and the problem
is to minimize the cost.


\begin{enumerate}
    \item Choose \(u \in U\), \(k \in \mathbb{N}\), and
        \(v \in V(u,k)\).
        Replace \(u\in U\) by \(u - kv\) at a cost of \(Mk + S\); or 
    \item Remove all duplicates in \(U\) at a cost of \(D\) per 
        duplicate.
        This operation may not be done
        twice in a row.
\end{enumerate}

We refer the first move as a reduction of chunk size $k$. 
Throughout the game, all \(u\) must have \(N \leq |u|\).
The game ends when all \(u \in U\) satisfies $|u|=N$.
Note that any game is guaranteed to finish;
the goal is to finish the game with minimal cost.
One must balance the savings of taking the first
move with large \(k\) (thus avoiding the startup cost \(S\)) 
with the savings of ``combining like
terms'' in the second move.

In general, we fix the reduction direction choices \(V(u,k)\) once and for all,
and expect to solve the problem for many choices of \(U\).
Given a fixed choice of \(V(u,k)\) (for example, \(V_{\all}(u,k)\)),
a \textit{reduction policy} is a program that
plays the reduction strategy game.
The optimization problem is to find a good policy that runs with 
(close to) minimal cost for the \(U\) that appear in practice. We discuss various versions of reduction policies in Section \ref{sec:policies}. 

\begin{rem}
When doing controlled reduction, we always take the \textit{degree} $d$ to be the degree of the defining equation of the hypersurface $X$, and the \textit{target norm} $N$ to be $n$, the dimension of the ambient projective space of $X$. The input $U$ is obtained via Algorithm \ref{algo:tweak}. 
\end{rem}

\section{Reduction Policies}\label{sec:policies}

We now describe three practical implementations of reduction
policies. 
Following the tradition of previous implementations like
\cite{costa} and \cite{ToricControlledReduction},
all of our policies begin with the following steps: 
\begin{enumerate}
\item Choose a $u\in U$ to reduce.
\item Choose $v\in V(u, 1)$. 
\item Choose $k\in\N$ such that $v$ remains in $V(u,k)$. 
\end{enumerate}
The \(v\) is always chosen using Algorithm \ref{alg:chooseV:greedy}, a greedy algorithm, which always outputs $v\in V(u,1)$. Recall that for all \(v \in V(u,1)\), \(|v| = d\),
by the setup of Section \ref{subsec:abstract}.


\begin{algorithm}
\caption{Greedy algorithm to choose \(v\)}\label{alg:chooseV:greedy}
 \begin{algorithmic}[1]
\Require Inputs: $d\in\N$, $u\in\N^{n+1}$, $S\subseteq\{0,\ldots,n\}$
\Ensure Output: $v\in V(u, 1)$
  \State $v \gets [0, \ldots, 0]$
  \State \(i \gets d\)
  \For{\(s \in S\)}
    \If{\(u[s] \neq 0\)}
      \State \(v[s] \gets 1\)
      \State \(i \gets i - 1\)
      \If{\(i = 0\)}
        \State \textbf{break}
      \EndIf
    \EndIf
  \EndFor

  \While{$i > 0$}
    \For{\(m \in 0\)  \textbf{upto} \(n\)}
      \If{\(v[m] < u[m]\)}
        \State \(v[m] = v[m] + 1\)
        \State \(i = i - 1\)
      \EndIf
    \EndFor
  \EndWhile
  \State \Return $w$
\end{algorithmic}
\end{algorithm}

Observe that Algorithm \ref{alg:chooseV:greedy} guarantees that $v$ satisfies the assumption
of Proposition \ref{CR lemma}.

\begin{rem}
The reduction policy structure of choosing a \(v \in V(u,1)\) given
each \(u\) is not required and possibly not optimal.
It would be interesting to explore this in future work.
\end{rem}

\subsection{$p$-chunk reduction}\label{costachunks}

Our first reduction policy, which we call 
the \textit{\(p\)-chunk reduction}, is the one implemented
in \cite{costa} and \cite{ToricControlledReduction}.
It takes advantage of the fact that for all \(u, u^{\prime} \in U\), 
we have \(|u| \equiv |u^{\prime}| \mod p\),
which follows from Equation \ref{frob}.
Let $L\coloneq \max\{|u|: u \in U\}$ and 
 \(U_{\max} = \{u \in U \mid |u| = L \}\).
The \(p\)-chunk reduction policy goes as follows:

\begin{enumerate}[(1)]
    \item For every \(u \in U_{\max}\), 
        choose a \(v\) using Algorithm \ref{alg:chooseV:greedy}
        to reduce \(u\) by a chunk of size \(p\).
        If \(|u| = p\),
        then reduce by a chunk of size \(p - n\).
    \item Combine like terms.
    \item Recalculate \(L\) and \(U_{\max}\).
    \item Repeat steps 1-3 until the game described in Section \ref{subsec:abstract} is won.
\end{enumerate}

Towards the end of the reduction, many like terms will be combined
as there are fewer possibilities for \(u\) when it has smaller norm.

The \(p\)-chunk reduction policy is most effective when \(p\) is very large,
since if \(p\) is small the start-up costs will accumulate and make the
algorithm slow.

\subsection{Depth-first reduction}\label{subsec:naive reduction}

Our second reduction policy, which we call \textit{depth-first reduction},
works
as follows. For each $u\in U$, 
\begin{enumerate}
\item Choose a \(v\) by Algorithm \ref{alg:chooseV:greedy}.
\item Find the largest \(k\) such that \(v\) is still in \(V_{\full}(u,k)\). Then reduce \(u\) by a chunk of size \(k\).
\item Repeat steps 1-2 until the game described in \S\ref{subsec:abstract} is won. 
\end{enumerate}
Unlike $p$-chunk reduction, this strategy never combines like terms.

Since the depth-first policy prioritizes
long reduction chunks over combining like terms,
it works best when the size of \(U\) is small.
For example, at smaller primes,
Equation \ref{frob} will have fewer cross terms.
Also, if the equation \(f\) of the hypersurface
is sparse, then Equation \ref{frob} will have
less terms and the performance of 
depth-first reduction will be improved.
This policy outperforms \(p\)-chunks reduction
in our situations, for example in the case
of quartic K3 surfaces of medium-low characteristic. 

\subsection{Variable-by-variable reduction}\label{subsec:varbyvar}

The third reduction policy that we have implemented, 
\textit{variable-by-variable reduction},
aims to balance the approaches of the previous two.
Here we assume that \(X\) is $\{n\}$-smooth in the sense of Definition \ref{nondegenerate}.
In this case, the first
for loop of Algorithm \ref{alg:chooseV:greedy}
does not affect its output.
The policy works by 
\begin{enumerate}[(1)]
    \item For each \(u\), choose a \(v\) by Algorithm \ref{alg:chooseV:greedy}.
    \item Choose \(k\) to be the largest \(k\) such
        that \(v \in V(u,k)\). 
        Reduce \(u\) by a chunk of size \(k\).
    \item Repeat steps 1-2 until the \(n\)-th component \(u_{n}\)
        of \(u\) is zero.
    \item Now that all \(u\) have \(u_{n} = 0\), combine like terms
    \item Repeat steps 1-4 for \(u_{n-1}\), 
        \(\ldots u_{0}\) until the game ends.
\end{enumerate}

This policy works better when \(U\) is more dense, so
there are likely to be many \(u \in U\) with the same first \(k\) entries.
It also works better when the degree of the equation \(f\) is low,
note that for the \(k\)-th entry, step (3) happens at most \(d\) 
times for each \(u\).
It has the added benefit of being well suited
to the GPU--it requires only a few matrices
to be stored on the GPU at a time.

\subsection{Outlook}

The authors do not suspect that our reduction policies are the most optimal.
In particular, one thing that all of our reduction policies
have in common is that they do not inspect \(U\) at all.
We hope that the framework of the abstract reduction problem
inspires future work on more efficient reduction policies,
which may lead to significant speedups.
We hope to address this in future work.

\section{Algorithms and Data Structures}\label{sec:algo}

A fast implementation of controlled reduction relies on efficient 
implementations of many auxiliary algorithms that are not, a priori, related to controlled reduction.
In particular, to implement controlled reduction, one must at least have
(1) a matrix multiplication 
mod \(p^{n}\), and (2) fast evaluation of linear recurrences of polynomials (with coefficients that
are matrices with entries
mod \(p^{n}\)).
Furthermore, fast implementations of these algorithms are not always widely available or
immediately
usable, though if we are on the CPU, the FLINT library has fast implementations
of matrix multiplication.
We provide implementations of two variants of matrix multiplication,
and we introduce data structures for managing linear recurrences.
Our implementations are released as open-source software.




\subsection{Partially Evaluated (linear) Polynomials--a novel data structure for fast evaluation of
matrices of polynomials}\label{PEP}

We introduce 
an abstract data structure that can be used to encapsulate the
evaluation procedure described in Section \ref{subsec:eval:ruv}.
Let \(A\) be a (possibly noncommutative) ring.
For us, \(A\) is always \(M_{n}(B)\) for 
some commutative ring \(B\).
We consider an element 
\(R \in A[u_{0}, \ldots, u_{n}, v_{0}, \ldots, v_{n}]\)
of degree \(1\).
We let \(V \in \mathbb{N}^{n+1}\) be
some finite set of values.
For the evaluation in Section \ref{subsec:eval:ruv},
we need to store polynomials 
\(R^{\prime} = R(v_0, \ldots, v_{n}) \in A[u_{0}, \ldots, u_{n}]\) 
for each value \(v\in V\) with $|v|=d$.
Creating \(R\) can be an expensive operation,
while storing an \(R(v_{0}, \ldots, v_{n})\) can take a lot of memory.
Finally, we must manage the process of evaluating 
the matrix
\(R^{\prime}(x_{0} + yv_{0}, \ldots, x_{n} + yv_{n})\)
to get the matrices \(A\) and \(B\) from Section \ref{subsec:eval:ruv};
this also is potentially an expensive operation.


A \textbf{Partially Evaluated Polynomial} (PEP) is
a key-value data
structure which stores evaluations of \(R\) for
various fixed values \(v \in V\),
manages the storage of the \(R^{\prime}\),
and contains methods for evaluation of linear
recurrences as in \ref{subsec:eval:ruv}.
A PEP data structure has
has three main operations:

\begin{itemize}
    \item \texttt{compute}, a function
        provided by the user 
        which inputs a \(v_{0}\) and returns
        a list of the coefficients of \(R(u,v_{0}) \in A[u_{0}, \ldots, u_{n}]\)
    \item \texttt{get}, which gets \(R^{\prime} = R(u,v)\).
    \item \texttt{eval\_to\_linear!}, which takes the output
        of \texttt{get} and an \(x = (x_{0}, \ldots, x_{n})\) 
        and evaluates \(R(x + yv, v) \in A[y]\)
        as in Section \ref{subsec:eval:ruv}.
        This can then be used for fast evaluation
        as in Algorithm \ref{horner}.
\end{itemize}

PEP structures generally have some sort of underlying
dictionary and are distinguished by how they 
manage the creation of key-value pairs.
We provide an abstract Julia type, \texttt{AbstractPEP},
as a supertype for different PEP implementations.
We provide five such implementations:

\begin{itemize}
    \item \texttt{EagerPEP}, which is backed by a dictionary
        and \texttt{compute}s
        all possible keys upon initialization
        (optionally, in parallel).
    \item \texttt{LazyPEP}, which is backed by a dictionary 
        and \texttt{compute}s
        keys lazily.
    \item \texttt{LRULazyPEP}, which is backed by an LRU Cache of a fixed size,
        and computes keys lazily.
    \item \texttt{LRUCachePEP}, which
        assumes the output coefficients are expected
        to be in GPU memory.
        It is backed by an internal PEP
        on the CPU, and
        an LRU cache whose entries are in GPU memory,
        and it manages moving entries to and from the GPU.
    \item \texttt{LFUDACachePEP},
        which gives the same behavior as \texttt{LRUCachePEP}
        but uses an LFUDA cache.
\end{itemize}

Julia's JIT-compiler and dynamic dispatch make PEP objects
more or less interchangeable, allowing one to rapidly test 
different memory management strategies.
The optimal choice of PEP object tends to depend heavily on
the choice of reduction policy; for example, we anecdotally observed that
the LFUDA backing had fewr cache misses with the depth-first
policy (Section \ref{subsec:naive reduction}) than LRU,
while the LRU backing had fewer misses with the
variable-by-variable policy (Section \ref{subsec:varbyvar}).
As another anecdote, the \texttt{LRULazyPEP} became useful for our
computations with cubic fourfolds, when our CPU memory became a 
constraint (in addition to GPU memory, which was also a constraint).

\subsection{\texttt{CuModMatrix}: mod \texorpdfstring{\(N\)}{p power of n} matrix multiplication
on GPUs}

We provide Julia a (dense) GPU matrix datatype for mod \(N\) linear 
algebra, \texttt{CuModMatrix}, implemented using CUDA.jl. 
\texttt{CuModMatrix} implements standard matrix operations 
(matrix addition and multiplication, inverse,
LU decomposition, etc.).
Each \texttt{CuModMatrix} has an element type which
must be a Julia integer of float type (e.g. \texttt{Int8}
or \texttt{Float64}) that backs the elements
of the matrix.
If the element type is a floating point type,
the mantissa is used to efficiently obtain 
a smaller integer type
({e.g.} \texttt{Float64} becomes \texttt{Int53},
\texttt{Float32} becomes \texttt{Int24}).
We prefer these floating-point-backed types,
as we can use Nvidia's CUBLAS \cite{nvidia-2024-cublas}.

We use the method of 
\cite[Algorithm~1]{bglm-2023-gpu-matmul-modp}
for mod \(N\) matrix multiplication.
This involves breaking the matrix into vertical stripes,
whose lengths are determined by the number of possible operations
without overflow. 
The algorithm uses CUBLAS to multiply the stripes and
reduces the entries modulo \(N\).

\subsection{\texttt{KaratsubaMatrix}: Mod \(p^n\) matrix multiplication on GPUs via Karatsuba
multiplication}


Often, the necessary \(p\)-adic precision for controlled
reduction is too big to do matrix multiplications
within the \texttt{Float64}
datatype (i.e. \texttt{Int53}) which is the largest
one available for use with cuBLAS. For cubic threefolds over $\FF_{11}$, for example,
the precision analysis
of \cite{AKR}
indicates that we do matrix multiplications \(\mod 11^8 \).
However, 
we have that
\(\left\lfloor 
\frac{2^{53}}{(11^8 - 1)^{2}} \right\rfloor
\approx \left\lfloor 0.19 \right\rfloor = 0 \),
meaning that we cannot justify 
even a single multiplication.

To overcome this, we use Karatsuba multiplication.
To multiply two matrices 
\(A,B \mod N\), we write 
 \(A = A_1 + MA_2\) and \(B = B_2 + MB_2\), where we require $N|M^2$. 
Then 
\[AB = A_{1}B_{1} + M(A_{2}B_{1} + A_{1}B_{2})
+ M^2A_{2}B_{2}.\]
We can re-express the term $A_2B_1 + A_1B_2$ as the difference between
products
\((A_1 + A_2)(B_1 + B_2) - A_{1}B_{1} - A_{2}B_{2}\),
so computing the multiplication $AB$ now requires only 3, instead of 4, matrix 
multiplications.
We may want to choose \(M\) 
to be \(11^{4}\) in the above example, in which case we can discard
the last term $M^2 A_2B_2$.
This approach is considered in \cite{bao-2024-thesis}.

We provide a GPU-accelerated \texttt{KaratsubaMatrix} type that also
supports 
basic operations such
as addition, subtraction, negation, and scalar multiplication.



\section{Examples}\label{sec:newton-realization}
In this section, we survey prior work in the literature and discuss the methodologies adopted for finding all of our explicit examples.
The equations for each example can be found in \cite{zenodo}.

\subsection{Methodology}
There are two principal methods for finding explicit examples of hypersurfaces
with interesting Newton polygons. 

\subsubsection{Method 1: Fully random guessing}
We randomly choose equations of hypersurfaces,
computing the zeta function of each, hoping
to find one with the desired Newton polygon.
The idea is to exploit the fact that Newton strata jump in codimension 1 
\cite{de-jong-oort-2000-newton-strata},
so we expect that by randomly sampling
the closure of some stratum, finding a hypersurface in a stratum one jump higher 
has probability roughly \(1 / p\).
Thus, if a strata has codimension \(n\), the chance of finding
such a hypersurface has probability \(1 / p^{n}\).
We can formally justify these expectations using
the Lang-Weil estimate \cite{lang-weil-1954-estimate},
though that bound is not so tight as to determine what happens practically.
This method is more effective the smaller \(p\) is.


\subsubsection{Method 2: Random deformations of a fixed example}

In this method, we fix a hypersurface and add random monomials to it. 
We try to fix a hypersurface which has a Newton polygon as high as possible,
ideally supersingular.
For example, we may choose the Fermat hypersurface or a member of a
Dwork-type pencil.
The idea is to exploit a phenomenological observation: equations with higher Newton polygons are
often similar-looking to other equations with higher Newton polygons.
This phenomenon has not been fully explored in the literature, and in fact we hope to do so in
future work.
However, there are theoretical reasons to expect such similarities.
Since the Newton strata are locally closed strata in the moduli space,
they are defined by polynomial equations.
If the defining equations of these strata have low degree, that could lead to
``similar-looking equations'' among various points of the strata.
For example, if we consider quartic K3 surfaces, we may compute the equation of
the \(h = 2\) Newton stratum using Fedder's criterion \cite[\S2]{ma-polstra-2021-F-sing-comm-alg}, which
has degree \(p-1\).


\subsection{Curves}
Let $g\geq 1$, $\mathcal{M}_g$ be the moduli space of genus $g$ curves, and $\mathcal{A}_g$ be the moduli space of $g$-dimensional
principally polarized abelian varieties. One is generally interested in how the \textit{open Torelli locus}, i.e. the image of 
$\mathcal{M}_g$ under the Torelli morphism $\tau_g:\mathcal{M}_g\to \mathcal{A}_g$ sending a curve to its Jacobian, 
intersects various strata of $\mathcal{A}_g$, in particular the $p$-rank strata and Newton polygon strata. 
For the former, 
Achter and Pries \cite{Achter-Pries-monodromy} has shown that there exists a curve $C/\overline{\FF}_p$ of genus $g$ and $p$-rank $f$ for all $0\leq f\leq g$; the same result holds for hyperelliptic curves as well \cite{Achter-Pries-hyperelliptic}.  
We are interested in whether the result of Achter and Pries holds over $\FF_p$ for plane curves.
We study quintics, sextics, and heptics, and use a combination of Method 1 and Method 2. We find

\begin{enumerate}
    \item Plane quintics of all possible \(p\)-ranks for \(p = 7, 11, 13\). See \cite{zenodo}.
    \item Plane sextics of all \(p\)-ranks \(\geq 6\) for p = \(7,11,13\). See \cite{zenodo}.
    \item Plane heptics which are ordinary and almost ordinary for \\\(p = 7,11,13,17,19,23,31\). See \cite{zenodo}.
\end{enumerate}

\subsection{Quartic K3 surfaces}

Let \(X\) be a K3 surface; the Hodge numbers of \(H^{2}_{rig}(X)\) are 
\(1, 20, 1\).
Thus, the only possibilities for the Newton polygon are 
\begin{enumerate}[(1)]
    \item A straight line of slope 1 for 22 units,
        i.e. the supersingular case
    \item Lines connecting the vertices
        \((0,0), (h,h-1), (22-h,22-h-1), (22,22)\)
        for some \(h \in \{1, \ldots, 10\}\).
\end{enumerate}

The \(h\) in possibility (2) is called the
\textit{Artin-mazur height} of \(X\).
If \(X\) is supersingular, we say that \(h = \infty\).
Alternatively, note that the first slope of the Newton polygon is
\(\frac{h-1}{h} = 1 - \frac{1}{h}\). 

Over an algebraically closed field \(k\) of characteristic 
\(p\), K3 surfaces of all heights exist by 
\cite{taelman-2016-k3-given-l-function}.
Quartic K3 surfaces of any height
exist over $\FF_2$
by \cite{K3F2},
over \(\mathbb{F}_{3}\) by 
\cite{kty-2022-fedder},
and over \(\mathbb{F}_{5}\) and \(\mathbb{F}_{7}\) 
by \cite{bgp-2025-k3-surfaces}.
Over \(\mathbb{F}_{11}\) and \(\mathbb{F}_{13}\),
\cite{bgp-2025-k3-surfaces} provides examples
of heights less than or equal to five.
We obtain examples for quartic K3 surfaces of heights
up to and including 4
over \(\mathbb{F}_{p}\) for \(p = 17, 19, 23\).
See \cite{zenodo}.

Here we use Method 1 to obtain our result;
we use Fedder's criterion \cite[\S2]{ma-polstra-2021-F-sing-comm-alg} to rule
out surfaces of height 1, so we have to make fewer random guesses to find surfaces of higher height.

\subsection{Quintic Surfaces}

Let \(X\) be any algebraic variety over a perfect
field \(k\) of characteristic \(p\).
Then \(X\) is \textit{Hodge-Witt} if the 
cohomology groups
\(H^{j}(X, W\Omega_{X}^{i})\)
are finitely generated \(W\)-modules for all
\(i\) and \(j\)
(where \(W\Omega_{X}^{i}\) are the sheaves
appearing in the de-Rham Witt complex).
More generally, Illusie and Raynaud \cite{Illusie-Raynaud}
introduce the notion of the \textit{domino numbers}
\(T^{i,j}\) associated to a variety in 
characteristic \(p\).
If \(X\) is a projective surface, then by 
\cite[\S2.23, \S2.37]{Joshi},
the only interesting domino for \(X\) is 
\(T^{0,2}\); all others are either zero or
related to this one by a duality theorem.
Furthermore, we have \(0 \leq T^{0,2} \leq \dim H^{n}(X, \mathcal{O}_{X})\),
and \(T^{0,2}\) is given as a formula depending on 
the Hodge numbers and slopes of the Newton polygon.
We may visually understand this formula as follows:
if we take the vertical distance between the Newton polygon and Hodge
polygon, the domino number \(T^{0,2}\) is the minimum such difference in the
slope 1 part of the Hodge polygon.
If \(X\) is a quintic surface, then \(\dim H^{n}(X, \mathcal{O}_{X}) = 4\) and so $T^{0,2}=0,1,2,3,4$; $X$ is Hodge-Witt if and only if $T^{0,2}=0$.

In characteristics \(7, 13\), and \(17\), the Fermat quintic
is supersingular. 
By applying Method 2 to the Fermat quintic, we obtain
examples of quintic surfaces of all possible domino numbers
in these characteristics.
However, in characteristic \(11\), the Fermat quintic
is ordinary, and the authors do not know of a supersingular
example.
Moreover, given the large number of possible Newton polygons
and the large amount of time that computing the zeta function of a single dense example takes
(about one hour), Method 1 does not seem viable
for quintic surfaces.

\subsection{Cubic Threefolds}
The Hodge numbers of a cubic threefold are 
\(0,5,5,0\).
Thus, their possible Newton polygons are the same as 
curves of genus 5, with all slopes increased by one.
The authors are not aware of any results on the
realization of Newton strata of cubic threefolds,
even over \(\overline{\mathbb{F}}_p\), in the literature.

We find cubic threefolds in characteristic \(p = 7,11\) and \(13\) 
which are ordinary and almost ordinary,
using Method 1.
See \cite{zenodo}.

\subsection{Cubic Fourfolds}
\label{subsec:fourfolds}

Let \(X\) be a cubic fourfold.
The Hodge numbers of \(H^{4}_{rig}(X)\) are
\(0,1,21,1,0\); hence the possibilities for the Newton polygon of $X$
are the same as for a K3 surface, 
except that the slopes are all increased by one.
Thus, primitive cohomology ``looks like'' the 
cohomology of a K3 surface that has been
Tate twisted by 1.
As such, we may define the \textit{Newton height} of $X$
analogously as \(h = \frac{1}{2 - \newt_{1}}\),
or \(2 - \frac{1}{h} = \newt_{1}\), where $\newt_1$ is the slope of the first segment in the Newton polygon of $X$.

There is some existing literature on examples of cubic fourfolds. \cite{cubic-fourfolds-f2} computes all cubic
fourfolds over \(\mathbb{F}_{2}\) up to isomorphism, and verify that all possible Newton polygons
occur, which fully resolves Question \ref{quest:strata:realization} over \(\mathbb{F}_{2}\).
Furthermore, Bragg and Perry, in an unpublished work \cite{bragg-perry}, calculate the Newton height of the
Dwork pencil of cubic fourfolds 
\(
x_0^3 + x_1^3 + x_2^3 + x_3^3 + x_4^3 + x_5^3 + \lambda \bigl(x_0 x_1 x_2 + x_3 x_4 x_5\bigr)
.\)
They determine that for all \(p\), there exists a unique \(\lambda_0\) for which the corresponding fourfold
\(X_{\lambda_0}\) is supersingular, and \(X_{\lambda}\) is ordinary for all other values of \(\lambda\). Then they deduce by a deformation argument that there exist cubic fourfolds of all heights over \(k = \overline{\mathbb{F}}_p\). However, this argument is nonconstructive, and
so it only answers Question \ref{quest:strata:realization} for the ordinary case.


Using Method 1, we find a cubic fourfold of heights \(1\) and \(2\) in characteristic \(p = 7\). 
See \cite{zenodo}.


\section{Performance Comparisons}\label{sec:performance}
In this section, we compare the practical performance of our reduction policies to existing implementations in various settings. Although our ultimate goal is to compute zeta functions of higher dimensional hypersurfaces, the method of controlled reduction \textit{can} be used to compute the zeta function of smooth plane curves. We provide some performance comparisons for completeness sake. For smooth plane quintics (curves of genus 6), we compare the performance of our implementation of the depth-first reduction policy \ref{subsec:naive reduction} together with fast evaluation (c.f. Algorithm \ref{horner}) on a single CPU against Tuitman's algorithm \cite{Tuitman}, a Kedlaya-type algorithm, as well as Kyng's algorithm \cite{Kyng}, an implementation of Harvey's trace formula method \cite{Harvey-Trace-Formula}, both of which are available in \textsc{Magma} \cite{Magma}. The running times for each of the aforementioned algorithms on smooth plane quintics defined by random dense polynomials $f\in\FF_p[x_0,x_1,x_2]_5$ can be found in Table \ref{table:plane quintics}. We note that Kyng's algorithm is considered to be the state of the art for computing the zeta functions of higher genus (possibly singular) plane curves. We did not expect to obtain better performance than either of Tuitman's or Kyng's algorithm. We observe in Table \ref{table:plane quintics} that our algorithm has a similar performance as Tuitman's algorithm. 

\begin{table}
\caption{Algorithms for plane quintics over $\FF_p$. Times in one Apple M3 Pro CPU core-seconds.}
\label{table:plane quintics}
\begin{tabular}{cccc}
$p$ & \ref{subsec:naive reduction} & \cite{Tuitman} & \cite{Kyng} \\
\hline 
11 & 0.911 & 8.24& 1.32 \\
13 & 0.683 & 1.3 & 1.26 \\
17 & 0.899 & 2.21 & 1.38 \\
19 & 0.987 & 2.36 & 1.43 \\
23 & 1.157 & 2.66 & 1.45 \\
29 & 3.898 & 3.13 & 0.57 \\
31 & 1.488 & 3.37 & 0.65 \\
37 & 1.692 & 18.72 & 0.76 \\
41 & 4.679 & 4.22 & 0.72
\end{tabular}

\end{table}

\cite{ToricControlledReduction} is the current state the art for computing zeta functions of
projective hypersurfaces beyond curves. In Table \ref{table:sparse-surfaces}, we see that our
implementation using the depth-first reduction policy \ref{subsec:varbyvar} obtains performance
surpassing \cite{ToricControlledReduction} on quartic surfaces with six terms. We note that we use
multithreading when computing the zeta functions of these surfaces in our experiments in practice,
which speeds up the computation even more; multithreading is well-supported in our implementation.
\cite{ToricControlledReduction} cannot compute zeta functions of random quintic surfaces due to
memory constraints, whereas each random quintic surface usually takes about one Apple M3 Max 
core hour to finish using our implementation with multithreading. The implementation in
\cite{ToricControlledReduction} can only handle cubic threefolds and fourfolds over $\FF_p$ for
$p\geq 13$, and it runs out of memory on all machines available to us (under a 32GB limit) when
computing the zeta function of random cubic fourfolds. Our implementation using the
variable-by-variable reduction policy \ref{subsec:varbyvar} takes between 2-3 hours on 
an Intel i5-8400 
with one Nvidia RTX 3070 GPU for random cubic fourfolds over $\FF_7$, and between 15-20
minutes (depending on the PEP structured used \ref{PEP}) for the Fermat cubic fourfold over $\FF_7$.

\begin{table}[ht]
\caption{Algorithms for degree $d$ surfaces in $\PP^3$ over $\FF_p$ with six terms. Times in one AMD Ryzen 5 2600 core-seconds.}
\label{table:sparse-surfaces}
\centering
\small
\setlength{\tabcolsep}{6pt}
\renewcommand{\arraystretch}{1.05}

\begin{tabular}{r rrr @{\hskip 18pt} rrr @{\hskip 18pt} rrr}
& \multicolumn{2}{c}{Cubic Surfaces}
& \multicolumn{2}{c}{Quartic Surfaces} \\
& \multicolumn{2}{c}{$(n=3, d=3)$}
& \multicolumn{2}{c}{$(n=3, d=4)$} \\
\cmidrule(lr){2-3}\cmidrule(lr){4-5} \\
$p$
& \ref{subsec:varbyvar}& \cite{ToricControlledReduction} 
&  \ref{subsec:varbyvar}& \cite{ToricControlledReduction} \\
\midrule
7 & 0.15 & - & 7.22 & - \\
11 & 0.13 & 0.13 & 9.71 & 12.79 \\
13 & 0.08 & 0.07 & 7.85 & 10.70 \\
17 & 0.09 & 0.07 & 10.29 & 12.98 \\
19 & 0.09 & 0.07 & 11.11 & 13.94 \\
23 & 0.09 & 0.07 & 4.69 & 7.14 \\
29 & 0.10 & 0.07 & 5.28 & 6.93 \\
31 & 0.10 & 0.07 & 5.56 & 7.34 \\
\bottomrule
\end{tabular}
\end{table}

\bibliographystyle{amsalpha}
\bibliography{source.bib}

\appendix

\end{document}